\def\today{\ifcase \month \or
   January \or February \or March \or April \or
   May \or June \or July \or August \or
   September \or October \or November \or December \fi
   \space\number\day , \number\year}
\newcommand{\R}{\mathbb{R}}
\newcommand{\C}{\mathbb{C}}
\newtheorem{theorem}{Theorem}[section]
\newtheorem{remark}{Remark}[section]
\begin{document}

\thanks{2010 {\it Mathematics Subject Classification.} Primary 47D62, 47A35; Secondary 47D06, 47A10.}

\keywords{$n$-times integrated semigroup; asymptotic behaviour, stability, nonquasianalytic weight.}

\title[Stability of $N$-times integrated semigroups]{On stability of $N$-times integrated semigroups with nonquasianalytic growth}

\author[J. E. Gal\'e, M. M. Mart\'inez and P. J. Miana]{Jos\'e E. Gal\'e, Mar\'ia M. Mart\'inez and Pedro J. Miana}
\address{Departamento de Matem\'{a}ticas e IUMA,
Universidad de Zaragoza, \linebreak 50009, Zaragoza, Spain.}

\email{gale@unizar.es (J.E. Gal\'{e}), martinezmartinezm@gmail.com (M.M. Mart\'{i}nez), pjmiana@unizar.es (P.J. Miana).}

\thanks{The research has been partially supported  by Project MTM2013-42105-P, DGI-FEDER, of the MEYC; 
Project E-64, fondos FEDER, D.G. Arag\'on.}

\begin{abstract}
We discuss the behaviour at infinity of $n$-times integrated semigroups with nonquasianalytic growth and invertible generator. The results obtained extend in this setting a theorem of O. El Mennaoui on stability of bounded once integrated semigroups, and (partially) a theorem of Q. P. V$\tilde{\rm u}$ on stability of $C_0$-semigroups.
\end{abstract}

\maketitle
\section {Introduction}

Let $A$ be a closed operator on a Banach space $X$ with domain $D(A)$.
The solution $u\colon[0,\infty)\to X$ of the Cauchy equation
\begin{equation}\label{Cauchy}
u'(t)=Au(t),\,\, t\ge0; \quad u(0)=x\in D(A)
\end{equation}
is given by $u(t)=T_0(t)x$ where $T_0(t):=e^{tA}$, $t\ge 0$, is the $C_0$-semigroup generated by $A$, provided $A$ satisfies the Hille-Yosida condition; see \cite[Section 3.1]{[ABHN]}. There still are other important cases where $A$ does not satisfy that condition but it is the generator of an exponentially bounded {\it $n$-times integrated semigroup} in the following sense:

There exist a family $(T_n(t))_{t\ge0}$ of bounded operators on $X$ and $C,w\ge0$ such that
$\Vert T_n(t)\Vert\leq Ce^{wt}$, $t\ge0$, and
$$
(\lambda-A)^{-1}x=\lambda^n\int_0^\infty e^{-t\lambda} T_n(t) x\, dt, \quad \Re \lambda>w, \quad x\in X.
$$
Every $C_0$-semigroup is a $0$-times integrated semigroup; for more information on integrated semigroups and examples see \cite[Sections 3.2 and 8.3]{[ABHN]}, \cite{[ANS]} and references therein. 

Put $\displaystyle u(t):=(d/ dt)^nT_n(t)x$, so that  
$$
T_n(t)x=\int_0^t {(t-s)^{n-1}\over (n-1)!}\ u(s)x\ ds,\qquad x\in X.
$$
Then the function $u(t)$ 
is the unique solution to the equation (\ref{Cauchy}) and the limit $\lim_{t\to\infty}T_n(t)$ -or alternatively its ergodic version 
$\lim_{t\to\infty}t^{-n}T_n(t)$- 
reflects the asymptotic behaviour of the solution $u$ at infinity. In this respect,
when $n=0$, a uniformly bounded $C_0$-semigroup $(T_0(t))_{t\ge0}$ is said to be 
stable if
$$
\lim_{t\to\infty}T_0(t)x=0, \quad x\in X.
$$

The stability of uniformly bounded $C_0$-semigroups on Banach spaces, under certain spectral assumptions on their infinitesimal generators, is proven in \cite{[AB]} and \cite{[LV]}, with different proofs and independently one from each other. We refer to this stability result as the Arendt-Batty-Lyubich-V$\tilde{\rm u}$ theorem. It states that 
$(T_0(t))_{t\ge0}$ is stable whenever
$$
\sigma(A)\cap i\R\,\, {\hbox {is countable and}}\,\, \sigma_P(A^*)\cap i\R=\emptyset.
$$
Here $\sigma(A)$ is the spectrum of the generator $A$ and $\sigma_P(A^*)$ is the point spectrum of the adjoint operator $A^*$ of $A$. For the asymptotic behaviour and stability of operator semigroups we refer the reader to \cite{[CT]} and \cite{[vN]}.

It seems interesting to have a result like the Arendt-Batty-Lyubich-V$\tilde{\rm u}$ theorem for $n$-times integrated semigroups. In this setting, a notion of stability has been defined for once integrated semigroups as follows. Suppose that 
$(T_0(t))_{t\ge 0}$ is a $C_0$-semigroup and let $(T_1(t))_{t\ge 0}$ denote the (trivial, say) once integrated semigroup induced by $(T_0(t))_{t\ge 0}$, which is defined by
$$
T_1(t)x:=\int_0^t T_0(s)x\ ds, \quad x\in X.
$$
By a well known property of $C_0$-semigroups,
$$
T_0(t)x-x=AT_1(t)x(=T_1(t)Ax), \quad \hbox { for } x \in D(A).
$$
Assume in addition that the $C_0$-semigroup $(T_0(t))_{t\ge 0}$ is stable. Then, {\it provided $A$ is invertible}, one gets that there exists the limit
$$
\lim_{t\to\infty}T_1(t)x=-A^{-1}x, \quad x\in\overline{D(A)}.
$$
Motivated by this observation, a (nontrivial) general once integrated semigroup $T_1(t)$ is called {\it stable} in \cite[p. 363]{[EM]} when $\lim_{t\to\infty} T_1(t)x$ exists for every $x\in\overline{D(A)}$. Moreover, it is also shown in \cite[Prop. 5.1]{[EM]} that  if a once integrated semigroup $T_1(t)$ is stable in the sense of that definition then $A$ must be invertible, which is to say that $0$ belongs to the resolvent set $\rho(A)$ of $A$. The following result is \cite[Theorem 5.6]{[EM]}. It gives a version of the
Arendt-Batty-Lyubich-V$\tilde{\rm u}$ theorem for once integrated semigroups.

\begin{theorem}\label{integrastable}
Let $A$ be the generator of a once integrated semigroup $(T_1(t))_{t\ge 0}$ such that 
$\sup_{t>0}\Vert T_1(t)\Vert< +\infty$. Assume in addition that
$\sigma(A)\cap i \R$ is countable, $\sigma_P(A^*)\cap i \R=\emptyset$ and $ 0\in\rho(A)$. 
Then $(T_1(t))_{t\ge 0}$ is stable.
\end{theorem}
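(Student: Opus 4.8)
The plan is to reduce the statement to the function‑theoretic core of the Arendt--Batty--Lyubich--V$\tilde{\rm u}$ theorem, applied to the bounded $X$‑valued function obtained by subtracting off the expected limit. Since $0\in\rho(A)$, the operator $A^{-1}$ is bounded, so for $x\in\overline{D(A)}$ we may put $g(t):=T_1(t)x+A^{-1}x$. By an elementary Abelian theorem applied to the generating relation, if $T_1(t)x$ converges its limit can only be $-A^{-1}x$, so the theorem is equivalent to the assertion that $g(t)\to0$ for every $x\in\overline{D(A)}$. I would first record the basic properties of $g$: it is bounded, since $\|g(t)\|\le(\sup_s\|T_1(s)\|+\|A^{-1}\|)\|x\|$; it is uniformly continuous, because for $x\in D(A)$ one has $\frac{\rd}{\rd t}T_1(t)x=T_1(t)Ax+x$, so $T_1(\cdot)x$ is Lipschitz, and the general case $x\in\overline{D(A)}$ follows by approximating $x$ by elements of $D(A)$ and using $\|T_1(t)\|\le M$; and, since $A^{-1}x\in D(A)$ and $T_1(t)$ leaves $D(A)$ invariant, $g(t)=\frac{\rd}{\rd t}\,T_1(t)A^{-1}x$, so that $G(t):=T_1(t)A^{-1}x$ is a bounded primitive of $g$ with $G(0)=0$. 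Taking Laplace transforms and using the identity $R(\lambda,A)x+A^{-1}x=\lambda R(\lambda,A)A^{-1}x$ (a case of the resolvent equation, using $0\in\rho(A)$), one gets, for $\Re\lambda$ large --- hence, by boundedness of $(T_1(t))$ and analytic continuation, for all $\Re\lambda>0$ ---
$$
\widehat g(\lambda):=\int_0^\infty e^{-\lambda t}g(t)\,\rd t=R(\lambda,A)A^{-1}x .
$$

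The spectral hypotheses now enter through this formula. Since $\|T_1(t)\|\le M$ forces $\{\Re\lambda>0\}\subseteq\rho(A)$, and since $\sigma(A)\cap i\R$ is countable with $0\in\rho(A)$, the right‑hand side extends holomorphically from $\{\Re\lambda>0\}$ to an open neighbourhood of $\overline{\{\Re\lambda>0\}}\setminus iE$, where $E:=\{\eta\in\R:i\eta\in\sigma(A)\}$ is countable and $0\notin E$. Thus $g$ is a bounded, uniformly continuous function whose Laplace transform extends holomorphically across $i\R$ outside the countable set $iE$.

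At this point I would invoke the Tauberian stability theorem for such functions --- the heart of the Arendt--Batty--Lyubich--V$\tilde{\rm u}$ theorem, obtained by contour integration (an Ingham--Karamata argument with a countable exceptional set): a function $g\in BUC([0,\infty),X)$ whose Laplace transform has a holomorphic extension to a neighbourhood of $i\R\setminus iE$, with $E$ countable, satisfies $g(t)\to0$ provided that for each $\eta\in E$ the limit $\lim_{\alpha\downarrow0}\alpha\,\widehat g(\alpha+i\eta)=\lim_{\alpha\downarrow0}\alpha R(\alpha+i\eta,A)A^{-1}x$ exists and equals $0$. Verifying this is the step I expect to be the main obstacle, and it is where the hypotheses $0\in\rho(A)$ and $\sigma_P(A^*)\cap i\R=\emptyset$ are genuinely used. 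Existence of the limit is \emph{not} automatic here: we do not have a bounded $C_0$‑semigroup in hand, only the bounded integrated semigroup and the bounded primitive $G$, so the mean‑ergodic step of the classical proof must be replaced by an argument drawing on the boundedness of $(T_1(t))$ together with the countability of $\sigma(A)\cap i\R$. Once the limit $v_\eta$ is known to exist, the closedness of $A$ and the identity $(i\eta-A)\,\alpha R(\alpha+i\eta,A)A^{-1}x=\alpha A^{-1}x-\alpha^2R(\alpha+i\eta,A)A^{-1}x\to0$ show $v_\eta\in\ker(i\eta-A)$; since $v_\eta$ is the value at $A^{-1}x$ of the associated ergodic projection $P_\eta$, whose adjoint has range contained in $\ker(-i\eta-A^*)$, the hypothesis $\sigma_P(A^*)\cap i\R=\emptyset$ forces that kernel to be trivial, hence $P_\eta=0$ and $v_\eta=0$. (Alternatively one can run the Lyubich--V$\tilde{\rm u}$ argument directly, on the closed $A$‑invariant subspace of those $x$ for which $T_1(t)x$ converges, using the countability of the boundary spectrum and the absence of imaginary point spectrum of $A^*$ to rule out a proper such subspace; the bookkeeping is heavier in the integrated setting.)

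Assembling the pieces gives $g(t)\to0$, that is, $T_1(t)x\to-A^{-1}x$ for every $x\in\overline{D(A)}$, so in particular the limit exists and $(T_1(t))_{t\ge0}$ is stable. To summarise: the reduction to $g$ and the computation of $\widehat g$ are routine; the analytic crux is the existence and vanishing of the ergodic limits at the exceptional imaginary frequencies, which is essentially the classical theorem carried out with a bounded once‑integrated semigroup in place of a bounded $C_0$‑semigroup.
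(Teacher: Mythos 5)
Your argument is correct in substance but follows a genuinely different route from the one in the paper. The paper quotes Theorem \ref{integrastable} from \cite{[EM]} and then recovers and generalizes it as the case $n=1$, $\omega\equiv 1$ of Theorem \ref{qnstable}(ii): the proof there extrapolates the integrated semigroup to a genuine $C_0$-semigroup $(S(t))_{t\ge0}$ on a larger space $Y$ (Arendt--Neubrander--Schlotterbeck), applies a weighted Lyubich--V$\tilde{\rm u}$-type quotient/completion argument (Theorem \ref{mejorVu}) to $S(t)(\mu-B)^{-n}$, and finally lets $\mu\to0^+$ using $0\in\rho(A)$. Your route --- reduce to $g(t)=T_1(t)x+A^{-1}x$, compute $\widehat g(\lambda)=(\lambda-A)^{-1}A^{-1}x$, and invoke the countable-spectrum Tauberian theorem for bounded uniformly continuous functions --- is more direct and entirely adequate for this bounded once-integrated case; what the paper's heavier machinery buys is the extension to weighted $n$-times integrated semigroups, to which your Laplace-transform argument does not obviously carry over.

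The one step you leave open --- existence and vanishing of $\lim_{\alpha\downarrow0}\alpha(\alpha+i\eta-A)^{-1}A^{-1}x$ for $i\eta\in\sigma(A)\cap i\R$ --- closes more easily than you anticipate, and the countability of $\sigma(A)\cap i\R$ plays no role in it. From $(\lambda-A)^{-1}x=\lambda\int_0^\infty e^{-\lambda t}T_1(t)x\,\rd t$ and $M:=\sup_t\Vert T_1(t)\Vert<\infty$ one gets $\Vert(\lambda-A)^{-1}\Vert\le M|\lambda|/\Re\lambda$, hence $\Vert\alpha(\alpha+i\eta-A)^{-1}\Vert\le M|\alpha+i\eta|\le M(1+|\eta|)$ for $0<\alpha\le1$: the Abel means are uniformly bounded for each fixed $\eta\ne0$ even though no bounded $C_0$-semigroup is available. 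On $\mathrm{ran}(i\eta-A)$ they tend to $0$, since $\alpha(\alpha+i\eta-A)^{-1}(i\eta-A)y=\alpha y-\alpha\cdot\alpha(\alpha+i\eta-A)^{-1}y\to0$; and that range is dense, because a functional annihilating it would be an eigenvector of $A^*$ with eigenvalue $i\eta$, contradicting $\sigma_P(A^*)\cap i\R=\emptyset$. Uniform boundedness together with convergence to $0$ on a dense subspace yields the ergodic condition at every $\eta\ne0$ in one stroke, so the appeal to an ergodic projection (which presupposes the very existence you are trying to establish) can be dropped. With this step inserted your proof is complete.
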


The boundedness condition assumed on $(T_1(t))_{t\ge 0}$ in Theorem \ref{integrastable} looks somehow restrictive: For a uniformly bounded $C_0$-semigroup $(T_0(t))_{t\ge 0}$ and its $n$-times integrated semigroup
$$
T_n(t)x={1\over(n-1)!}\int_0^t (t-s)^{n-1}T_0(s)x\ ds, \qquad t>0, \, x\in X,
$$
the derived boundedness condition on $(T_n(t))_{t\ge 0}$ which is to be expected from the integral expression is
$\sup_{t>0} t^{-n}\Vert T_n(t)\Vert<+\infty$, so that for $n=1$ is
$\sup_{t>0} t^{-1}\Vert T_1(t)\Vert<+\infty$ instead of boundedness.

The purpose of this note is to extend Theorem \ref{integrastable} to $n$-times integrated semigroups for every natural $n$ and a fairly wide boundedness condition involving nonquasianalytic weights.
We say that a positive measurable locally bounded function $\omega$ with domain $\R$ or $[0,\infty)$ is a weight if $\omega(t)\ge1$ and
$\omega(s+t)\le\omega(s)\omega(t)$ for all $t,s$ in its domain. A weight $\omega$ on $[0,\infty)$ is called nonquasianalytic if
$$
\int_0^\infty{\log \omega(t)\over t^2+1}\ dt<\infty.
$$
As in \cite[Section 1]{[V2]} we assume that
$\liminf_{t\to\infty}\omega(t)^{-1}\omega(s+t)\ge1$ for all $s>0$. Then one can define the function $\widetilde\omega$ on $\R$ given by
$$
\widetilde\omega(s):=\limsup_{t\to\infty}{\omega(t+s)\over\omega(t)}, s\ge0\ ,
\hbox{ and }
\widetilde\omega(s):=1, s<0,
$$
is a weight function. Clearly, $\widetilde\omega(t)\le\omega(t)$ for every $t\ge0$.

Our main result is the following. In the statement, and throughout the paper, the symbol $\lq\lq\sim"$ in $a(t)\sim b(t)$ as $t\to\infty$ means that
$\lim_{t\to\infty} b(t)^{-1}a(t)=c>0$ as $t\to\infty$.

\begin{theorem}\label{qnstable}
Let $A$ be the generator of a n-times integrated semigroup $(T_n(t))_{t\ge 0}$ such that
$\sigma(A)\cap i \R$ is countable, $\sigma_P(A^*)\cap i \R=\emptyset$ and $ 0\in\rho(A)$. Assume that
$$
\sup_{t\ge1}\omega(t)^{-1}\Vert T_n(t)\Vert< +\infty,
$$
for some nonquasianalytic weight $\omega$ on $[0,\infty)$ for which $\widetilde\omega(t)=O(t^k)$, as $t\to\infty$, for some $k\ge0$.

We have:
\begin{itemize}
\item[{\rm(i)}] If $\omega(t)^{-1}=o(t^{-n+1})$ as $t\to\infty$, then
$$
\lim_{t\to\infty}\omega(t)^{-1}T_n(t)x=0, \quad x\in \overline{D(A^n)}.
$$
\item[{\rm(ii)}] If $\omega(t)\sim t^{n-1}$ as $t\to\infty$, then
$$
\lim_{t\to\infty}t^{-n+1}T_n(t)x=-{1\over (n-1)!}A^{-1}x, \quad x\in \overline{D(A^n)}.
$$
\end{itemize}
\end{theorem}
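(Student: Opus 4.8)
\emph{Approach and first reductions.} The plan is to use the invertibility of $A$ to subtract from $T_n(\cdot)x$ an explicit polynomial part, after which the remainder is an $O(\omega(t))$--bounded orbit whose Laplace transform extends holomorphically across all of $i\R$ save a countable set; to this remainder one applies the Arendt--Batty--Lyubich--V$\tilde{\rm u}$ machinery in the nonquasianalytic--weight form of \cite{[V2]} and the integrated--semigroup form of \cite{[EM]}. First, any nonquasianalytic weight is subexponential: $\log\omega$ is subadditive, so $t^{-1}\log\omega(t)\to\inf_{t}t^{-1}\log\omega(t)=:\beta\ge0$, and $\beta>0$ would make $\int_0^\infty(t^2+1)^{-1}\log\omega(t)\,dt$ diverge; thus $\beta=0$ and $\|T_n(t)\|=O(\omega(t))=o(e^{\varepsilon t})$ for all $\varepsilon>0$. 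Hence $\int_0^\infty e^{-\lambda t}T_n(t)x\,dt$ converges for $\Re\lambda>0$, where it equals $\lambda^{-n}(\lambda-A)^{-1}x$; therefore $\{\Re\lambda>0\}\subset\rho(A)$, and $E:=\sigma(A)\cap i\R$ is a countable subset of $i\R$ with $0\notin E$.

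\emph{Removing the polynomial part.} As $0\in\rho(A)$, the function $\lambda\mapsto\lambda^{-n}(\lambda-A)^{-1}x$ has at $\lambda=0$ a pole of order at most $n$ with singular part $-\sum_{l=1}^{n}\lambda^{-l}A^{l-n-1}x$. Put
$$q(t)x:=-\sum_{l=1}^{n}\frac{t^{l-1}}{(l-1)!}A^{l-n-1}x,\qquad g(t)x:=T_n(t)x-q(t)x .$$
Then the Laplace transform of $t\mapsto g(t)x$ equals $\lambda^{-n}(\lambda-A)^{-1}x+\sum_{l=1}^{n}\lambda^{-l}A^{l-n-1}x$, which extends holomorphically to $\{\Re\lambda>0\}$ and to an open neighbourhood of $i\R\setminus E$ (the singular parts at $0$ cancel, and $i\R\setminus E\subset\rho(A)$). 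Also $\|q(t)x\|=O(t^{n-1})=O(\omega(t))$ under either hypothesis (i) or (ii), so $\|g(t)x\|=O(\omega(t))$; and a routine computation from $\frac{d}{dt}T_n(t)x=\frac{t^{n-1}}{(n-1)!}x+T_n(t)Ax$ shows that, for $x\in D(A^n)$, $\omega(t)^{-1}g(t)x$ is uniformly continuous for large $t$ (after replacing $\omega$, harmlessly, by an equivalent smooth weight).

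\emph{The core estimate.} The crux is to prove
$$\omega(t)^{-1}g(t)x\longrightarrow 0\quad(t\to\infty)\qquad\text{for every }x\in D(A^n) .$$
Here $g(\cdot)x$ is an $O(\omega(t))$--bounded, weighted uniformly continuous function whose Carleman spectrum is contained in the countable set $\{\beta\in\R:i\beta\in E\}$. The hypothesis $\sigma_P(A^*)\cap i\R=\emptyset$ means that $i\beta-A$ has dense range for each $i\beta\in E$, which forces the (total) ergodic means of $g(\cdot)x$ at those frequencies to vanish; and the nonquasianalyticity of $\omega$ together with $\widetilde\omega(t)=O(t^k)$ is exactly what makes the weighted Tauberian apparatus of \cite{[V2]} applicable, so that the transfinite induction over $E$ in the style of Arendt--Batty--Lyubich--V$\tilde{\rm u}$ can be carried out and terminates with limit $0$. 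In doing this one also uses the identity $A^nT_n(t)x=u(t)x-\sum_{j=0}^{n-1}\frac{t^j}{j!}A^jx$ (valid for $x\in D(A^n)$, with $u(t)x=(d/dt)^nT_n(t)x$) to transfer ergodic and almost--periodicity information between $T_n(\cdot)x$ and the solution $u(\cdot)x$.

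\emph{Conclusion and the main obstacle.} Granting the core estimate, fix $x\in D(A^n)$. For (i), $\omega(t)^{-1}=o(t^{-n+1})$ gives $\omega(t)^{-1}q(t)x\to0$, so $\omega(t)^{-1}T_n(t)x=\omega(t)^{-1}g(t)x+\omega(t)^{-1}q(t)x\to0$. For (ii), $\omega(t)\sim t^{n-1}$ gives $t^{-n+1}g(t)x=(\omega(t)\,t^{-n+1})\,\omega(t)^{-1}g(t)x\to0$, while among the terms of $t^{-n+1}q(t)x$ only the one with $l=n$ survives, giving $t^{-n+1}q(t)x\to-\frac{1}{(n-1)!}A^{-1}x$; hence $t^{-n+1}T_n(t)x\to-\frac{1}{(n-1)!}A^{-1}x$. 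Since the limiting maps are bounded and $\omega(t)^{-1}T_n(t)$ (resp. $t^{-n+1}T_n(t)$) is uniformly bounded for large $t$ by the standing bound on $\|T_n(t)\|$, density of $D(A^n)$ in $\overline{D(A^n)}$ and an $\varepsilon/3$--argument extend both conclusions to all of $\overline{D(A^n)}$. The main obstacle is the core estimate: one must fuse the transfinite Arendt--Batty--Lyubich--V$\tilde{\rm u}$ induction over the countable set $E$ (where $\sigma_P(A^*)\cap i\R=\emptyset$ is used) with the nonquasianalytic--weight Tauberian theory (where $\int_0^\infty(t^2+1)^{-1}\log\omega(t)\,dt<\infty$ and $\widetilde\omega(t)=O(t^k)$ are used) in a setting where $g(\cdot)x$ is not a $C_0$--semigroup orbit but the $n$--times ``de--integrated'' orbit with its polynomial part removed, so that the required weighted uniform continuity, the total ergodicity of the orbit on $\overline{D(A^n)}$, and the identification of its Carleman spectrum must all be re-established directly; this is the step that promotes \cite[Theorem 5.6]{[EM]} (essentially the case $n=1$, $\omega\equiv1$) to arbitrary $n$ and nonquasianalytic growth.
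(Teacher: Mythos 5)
Your reduction is sound as far as it goes: the subexponential growth of a nonquasianalytic weight, the identification of the polynomial singular part $q(t)x=-\sum_{l=1}^{n}\frac{t^{l-1}}{(l-1)!}A^{l-n-1}x$ (which matches the paper's $-\sum_{j=0}^{n-1}\frac{t^j}{j!}A^{j-n}x$), the endgame for (i) and (ii), and the final density argument are all correct. But the proof has a genuine gap, and you have named it yourself: the ``core estimate'' $\omega(t)^{-1}g(t)x\to0$ for $x\in D(A^n)$ is asserted, not proved. Everything of substance in the theorem lives in that step, and your description of how to obtain it --- ``fuse the transfinite ABLV induction over $E$ with the nonquasianalytic-weight Tauberian theory'' applied directly to the de-integrated orbit $g(\cdot)x$ --- is a programme, not an argument. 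In particular, \cite{[V2]} is a theorem about $C_0$-semigroups, not about individual $\omega$-bounded functions with countable Carleman spectrum; the claims that $\sigma_P(A^*)\cap i\R=\emptyset$ forces the weighted ergodic means of $g(\cdot)x$ to vanish, that the Carleman spectrum of $g(\cdot)x$ is contained in $E$, and that the weighted uniform continuity suffices to run the induction, all require proofs that you do not supply and that are not routine in the weighted, integrated setting.

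The paper closes exactly this gap by a different and more structural route, which your sketch does not anticipate. First, it renorms $X$ by $\Vert x\Vert_Y:=\sup_{t\ge0}\Vert e^{-\delta t}(T_n(t)A^n(\mu-A)^{-n}x+\sum_{j=0}^{n-1}\frac{t^j}{j!}A^j(\mu-A)^{-n}x)\Vert$ and invokes the extrapolation theorem of \cite{[ANS]} to realize $A$ as the part in $X$ of the generator $B$ of a genuine $C_0$-semigroup $(S(t))$ on the completion $Y$, with $S_n=T_n$ on $X$. Second, since $S(t)$ itself is only exponentially bounded on $Y$ while $S(t)(\mu-B)^{-n}$ is $O(\omega(t))$, it proves a new stability theorem (Theorem \ref{mejorVu}) for a semigroup $U(t)$ together with a commuting operator $R$ satisfying $\Vert U(t)R\Vert\le\beta(t)$ --- this is the device that replaces your direct orbit-level Tauberian analysis, and it is where the hypotheses $\int_0^\infty(t^2+1)^{-1}\log\omega(t)\,dt<\infty$, $\widetilde\omega(t)=O(t^k)$, countability of $\sigma(A)\cap i\R$ and $\sigma_P(A^*)\cap i\R=\emptyset$ are actually consumed (via the quotient/completion construction, the extension of $V(t)$ to a group, and $\sigma_P(H^*)\subseteq\sigma_P(L^*)$). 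Finally the paper lets $\mu\to0^+$ so that $A^n(\mu-A)^{-n}\to(-1)^nI$, using $0\in\rho(A)$, to pass from $T_n(t)A^n(\mu-A)^{-2n}x$ to $T_n(t)x$. To repair your proposal you would either have to reproduce this two-step reduction (extrapolation plus the $R$-weighted stability theorem) or genuinely develop the weighted individual-orbit Tauberian theorem you appeal to; as written, the central claim is unsupported.
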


\begin{remark}\label{extMennaoui}
\normalfont
For $n=1$, Theorem \ref{qnstable} (ii) is
\cite[Theorem 5.6]{[EM]}. So any $n$-times integrated semigroup $(T_n(t))_{t\ge 0}$ satisfying the equality of Theorem 
\ref{qnstable} (ii) might well be called {\it stable}. Alternatively, the ergodic type equality $\lim_{t\to\infty}\omega(t)^{-1}T_n(t)x=0$, $x\in \overline{D(A^n)}$, for $\omega(t)\sim t^{n}$ at infinity, defines a property on $(T_n(t))_{t\ge 0}$ which corresponds to stability of $C_0$-semigroups when $n=0$. Then one could say that a $n$-times integrated semigroup satisfying Theorem \ref{qnstable} (i) for
$\omega(t)\sim t^{n}$ as $t\to\infty$ is {\it stable of order $n$}, and {\it stable under $\omega$} in general.
\end{remark}

\medskip
\section {Proof of Theorem 1.2}
\label{proof}

In order to establish Theorem \ref{qnstable} one needs to extend 
\cite[Theorem 7]{[V2]} and \cite[Theorem 5.6]{[EM]}. Firstly, and more precisely, 
Theorem \ref{mejorVu} below is 
an improvement of \cite[Theorem 7]{[V2]}, which is in turn an extension of the Arendt-Batty-Lyubich-V$\rm{\tilde u}$ theorem. In fact \cite[Theorem 7]{[V2]}  is recovered in the particular case that the operator $R$ is the identity operator in Theorem \ref{mejorVu}. The case $\beta(t)\equiv 1$ in Theorem \ref{mejorVu} appears in \cite[Remark 3.3]{[AB]}.

For a Banach space $(Y, \Vert \cdot\Vert)$, let
$\mathcal B(Y)$ denote the Banach algebra of bounded operators on $Y$.

\begin{theorem}\label{mejorVu}
Let $(U(t))_{t\ge0}\subset\mathcal B(Y)$ be a  $C_0$-semigroup of positive exponential type with generator $L$. Let $\beta$ be a nonquasianalytic weight on $[0,\infty)$ such that $\widetilde\beta(t)=O(t^k)$ as $t\to\infty$, for some $k\ge0$.
Assume that there exists $R\in\mathcal B(Y)$ such that
$U(t)R=RU(t)$ for all $t\ge0$ and $\Vert U(t)R\Vert\le\beta(t)$ for $t\ge0$.

If $\sigma(L)\cap i\R$ is countable and $\sigma_P(L^*)\cap i\R=\emptyset$
then
$$
\lim_{t\to\infty}{1\over\beta(t)}U(t)Ry=0, \quad y\in Y.
$$
\end{theorem}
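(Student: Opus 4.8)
The plan is to adapt V$\tilde{\rm u}$'s method of proof of the Arendt--Batty--Lyubich--V$\tilde{\rm u}$ theorem (as in \cite{[V2]}, with the enhancement of \cite[Remark 3.3]{[AB]} for the weight) to the twisted orbit $t\mapsto U(t)Ry$. Fix $y\in Y$ and consider the bounded continuous function $F(t):=\beta(t)^{-1}U(t)Ry$ on $[0,\infty)$; one could equally work with the vector-valued function $t\mapsto U(t)Ry$ regarded as an element of a weighted space and study its Laplace transform $\widehat{U(\cdot)Ry}(\lambda)=(\lambda-L)^{-1}Ry$, which is analytic on $\Re\lambda>0$ and, since $\|U(t)R\|\le\beta(t)$ with $\beta$ nonquasianalytic, extends (in a suitable weighted/ultradistributional sense, using $\widetilde\beta(t)=O(t^k)$) to a neighbourhood of each point of $i\R\setminus(\sigma(L)\cap i\R)$. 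The nonquasianalyticity of $\beta$ is precisely what allows a Tauberian/harmonic-analysis argument to go through despite the growth: it guarantees that the weighted translation group on $L^1_{\widetilde\beta}(\R)$ (or the relevant Beurling algebra) has no nonzero ideals supported on the countable closed set $\sigma(L)\cap i\R$, because such algebras satisfy spectral synthesis for countable sets.

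First I would reduce to the case where $L$ generates a group, or more precisely set up the contraction/semigroup framework on the quotient space that kills the part of the spectrum off $i\R$: replace $Y$ by the closure of $RY$ if helpful, and use that $R$ commutes with $U(t)$ so that $RY$ is invariant and the restricted semigroup inherits the hypotheses. Then the heart of the argument is the following: show that the (weak-star) limit set, or the ``unitary spectrum'', of the almost periodic part of the twisted orbit is empty. Concretely, for each $i\xi\in i\R$ one defines the Arveson-type spectral subspace and shows that if $i\xi\notin\sigma(L)$ it contributes nothing (resolvent analyticity near $i\xi$, combined with a weighted Ingham-type Tauberian theorem valid for nonquasianalytic $\beta$), while if $i\xi\in\sigma(L)$ but $i\xi\in\sigma(L)\setminus\sigma_P(L^*)$ one uses the countability to peel off isolated points one at a time by transfinite induction on the Cantor--Bendixson derivative, exactly as in the Lyubich--V$\tilde{\rm u}$ proof; at an isolated point the corresponding spectral projection maps into $\ker(i\xi-L)$ or its dual pairs with $\ker(i\xi-L^*)=\{0\}$, forcing the contribution to vanish. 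The assumption $\sigma_P(L^*)\cap i\R=\emptyset$ is what makes the base step of this induction work.

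The main obstacle I expect is the weighted Tauberian input: one needs a version of the Katznelson--Tzafriri / Ingham theorem (or the Arendt--Batty contour-integration argument) that accommodates the growth $\|U(t)Ry\|\le\beta(t)$ with $\beta$ merely nonquasianalytic and $\widetilde\beta(t)=O(t^k)$, rather than boundedness. This is where $\widetilde\beta$ enters: the polynomial bound $\widetilde\beta(t)=O(t^k)$ lets one differentiate under the integral / integrate by parts $k+1$ times to tame the growth, reducing to a genuinely bounded situation at the cost of finitely many derivatives, while the Beurling nonquasianalyticity condition $\int_0^\infty\log\beta(t)(1+t^2)^{-1}\,dt<\infty$ ensures that the local spectral behaviour on $i\R$ is still rigid enough (the relevant Beurling algebra is regular and admits spectral synthesis for countable sets). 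Once this weighted Tauberian step and the transfinite induction are in place, the conclusion $\lim_{t\to\infty}\beta(t)^{-1}U(t)Ry=0$ follows; I would then remark that taking $R=I$ recovers \cite[Theorem 7]{[V2]} and taking $\beta\equiv1$ recovers the form in \cite[Remark 3.3]{[AB]}, as asserted.
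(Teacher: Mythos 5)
Your sketch names many of the right ingredients (V$\tilde{\rm u}$'s method, the role of nonquasianalyticity, countable spectrum, $\sigma_P(L^*)\cap i\R=\emptyset$), but it is missing the one construction that actually carries the proof, and the alternative route you gesture at does not hold up. The paper's argument (following \cite[Theorem 7]{[V2]}) hinges on the seminorm $q(y):=\limsup_{t\to\infty}\beta(t)^{-1}\Vert U(t)Ry\Vert$, the quotient of $Y$ by its null space $N$, and the completion $Z$ of $Y/N$. The conclusion of the theorem is precisely the statement $Z=\{0\}$, and everything is arranged to derive a contradiction from $Z\ne\{0\}$: the induced semigroup $V(t)$ on $Z$ satisfies both $\Vert V(t)z\Vert_Z\ge\Vert z\Vert_Z$ (from the limsup structure of $q$) and $\Vert V(t)\Vert\le\widetilde\beta(t)=O(t^k)$, its generator $H$ satisfies $\sigma(H)\subseteq\sigma(L)$ and $\sigma_P(H^*)\subseteq\sigma_P(L^*)$, and then \cite[Lemmas 2 and 5]{[V2]} extend $V$ to a group, contractive backwards and polynomially bounded forwards, with nonempty countable purely imaginary spectrum; an isolated point of that spectrum produces an eigenvalue of $H^*$, hence of $L^*$, contradicting the hypothesis. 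Your proposed ``reduction'' --- replacing $Y$ by $\overline{RY}$ --- does none of this work: the operator $R$ enters only through the seminorm $q$, and without the quotient/completion step there is no limit semigroup on which to run the isolated-point argument. You also misattribute the roles of the two weight hypotheses: nonquasianalyticity of $\beta$ is what guarantees (via \cite[Lemma 5]{[V2]}) that $\sigma(H)\ne\emptyset$ when $Z\ne\{0\}$, while $\widetilde\beta(t)=O(t^k)$ controls the forward growth of the extended group so that the eigenvalue-extraction at an isolated spectral point goes through.

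The alternative you lean on instead --- Laplace transform of the orbit plus a ``weighted Ingham-type Tauberian theorem'' --- is not substantiated and contains a concrete error: the claim that $\widetilde\beta(t)=O(t^k)$ lets you integrate by parts $k+1$ times to ``reduce to a genuinely bounded situation'' confuses $\widetilde\beta$ with $\beta$. The function $\widetilde\beta$ only bounds the asymptotic ratios $\beta(t+s)/\beta(t)$; the weight $\beta$ itself, and hence $\Vert U(t)R\Vert$, may grow like $e^{\sqrt{t}}$ while still being nonquasianalytic with $\widetilde\beta\equiv1$, so no finite number of integrations by parts tames the orbit. Moreover, a local analytic-extension argument at points of $i\R\setminus\sigma(L)$ cannot by itself handle a countable (rather than finite or isolated) singular set at this level of generality, which is exactly why both \cite{[AB]} and \cite{[V2]} need a global structure (transfinite induction on a quotient scale of spaces in one case, the limit semigroup in the other). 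As written, the proposal does not constitute a proof; the quotient-completion construction and the contradiction scheme built on it need to be supplied.
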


\begin{proof}
The overall argument goes along similar lines as in \cite[Theorem 7]{[V2]}, lemmata included. Next, we outline that argument for convenience of prospective readers and give details, when necessary, to extend the corresponding assertions to our setting. 

Put
$$
q(y):=\limsup_{t\to\infty} \beta(t)^{-1}\Vert U(t)Ry\Vert, \quad y\in Y.
$$
Then $q$ is a seminorm on $Y$ such that $q(y)\le\Vert y\Vert$ for all $y\in Y$. Moreover, $q(U(s)y)\le\widetilde\beta(s)q(y)$ for every $s\ge0$, $y\in Y$, and so
$N:=\{y\in Y:q(y)=0\}$ is a $U(t)$-invariant closed subspace of $Y$. Hence
one can define a norm $\widehat q$ on $Y/N$ given by
$$
\widehat q(\pi(y)):=q(y),\quad y\in Y,
$$
and an operator $\widehat U(t)$ on $Y/N$ given by
$$
\widehat U(t)(\pi (y)):=\pi(U(t)y),\quad y\in Y, t\ge0,
$$
where $\pi$ is the projection $Y\to Y/N$.

It is straightforward to show that $(\widehat U(t))_{t\ge0}$ is a strongly continuous semigroup in the norm $\widehat q$ on $Y/N$.
Let $(Z, \Vert\cdot\Vert_Z)$ be the $\widehat q$-completion of $Y/N$, and let $V(t)$ be the continuous extension on $Z$ of $\widehat U(t)$ for all $t>0$. Then:

\begin{itemize}
\item[{\rm(a)}]
$\Vert\pi(y)\Vert_Z
=\limsup_{t\to\infty}{1\over\beta(t)}\Vert U(t)Ry\Vert$ for $y\in Y$. This is obvious.
\item[{\rm(b)}] $\Vert V(t)\Vert_{Z\to Z}\le\widetilde\beta(t)$, $t\ge0$, and from this fact one readily obtains that $(V(t))_{t>0}$ is a  $C_0$-semigroup in $\mathcal B(Z)$. The above bound follows by continuity and density from the estimate
$$
\begin{aligned}
\widehat q(\widehat U(t)\pi (y))&
=\widehat q(\pi(U(t)y))=q(U(t)y) \\
&\le\widetilde\beta(t)q(y)\le\widetilde\beta(t)\widehat q(\pi (y)), \quad y\in Y, t\ge0.
\end{aligned}
$$
\item[{\rm(c)}] $\Vert V(t)z\Vert_Z\ge\Vert z\Vert_Z$ for all $z\in Z$: For $y\in Y$ and $t\ge0$,
$$
\widehat q(\widehat U(t)\pi (y))
=\limsup_{t\to\infty}{\beta(t+s)\over\beta(t)}{\Vert U(t+s)Ry\Vert_Y\over\beta(t+s)}
\ge \widehat q(\pi(y)).
$$
Then we apply continuity and density.
\item[{\rm(d)}] $V(t)\circ\pi=\pi\circ U(t)$ ($t\ge0$) and then one easily obtains that
$\pi(D(L))\subseteq D(H)$ and $H\circ\pi=\pi\circ L$ on $D(L)$, where $H$ is the infinitesimal generator of $(V(t))_{t\ge 0}$.
\item[{\rm(e)}] $\sigma(H)\subseteq\sigma(L)$: By hypothesis, $(U(t))_{t\ge 0}$ is of exponential type $\delta>0$ whence, as is well known, for $y\in Y$ and
$\lambda\in\C$, $\Re \lambda>\delta$,
$$
R(\lambda,L)y:=-(\lambda-L)^{-1}y
=-\int_0^\infty e^{-\lambda t} U(t)y\ dt.
$$
Similarly, since
$\Vert V(t)\Vert_{Z\to Z}\le\widetilde\beta(t)$ for all $t\ge0$, the semigroup $(V(t))_{t\ge 0}$ is of exponential type $0$, and therefore we have for $z\in Z$ and $\lambda\in\C$, $\Re \lambda>0$,
$$
R(\lambda,H)z:=-(\lambda-H)^{-1}z
=-\int_0^\infty e^{-\lambda t} V(t)z\ dt.
$$

On the other hand, $R$ commutes with $U(t)$, $t\ge 0$, by assumption and so $R$ commutes with $R(\lambda,L)$ for 
$\Re \lambda>\delta$. Then
$q(R(\lambda,L)y)\le\Vert R(\lambda,L)\Vert q(y)$ for all $y\in Y$, which implies that $N$ is $R(\lambda,L)$-invariant. 
Hence one can define the bounded operator
$\widehat R(\lambda,L)$ on $Z$ given by
$\widehat R(\lambda,L)(\pi(y)):=\pi\left(R(\lambda,L)y\right)$, $y\in Y$. 
Thus,
$$
\begin{aligned}
\widehat R(\lambda,L)\pi(y)&=\pi\left(R(\lambda,L)y\right)
=-\int_0^\infty e^{-\lambda t}\pi\left(U(t)y\right) dt \\
&=-\int_0^\infty e^{-\lambda t}V(t)\pi(y) dt
=R(\lambda,H)\pi(y)
\end{aligned}
$$
where (d) has been applied in the last but one equality. Hence
$\widehat R(\lambda,L)=R(\lambda,H)$, for $\Re\lambda>\delta$.

Now, for $\Re\lambda>\delta$ and any $\mu\in\rho(L)$, by using the resolvent identity
$$
R(\lambda,L)-R(\mu,L)=(\lambda-\mu)R(\lambda,L)R(\mu,L)
$$
on $Y$ and its corresponding identity for $\widehat R(\lambda,L)$ and $\widehat R(\mu,L)$ on $Z$, one readily finds that there exists $R(\mu,H)$ with
$$
R(\mu,H)=\widehat R(\mu,L),
$$
see more details in \cite[p. 234]{[V2]}. Thus $\mu\in\rho(H)$. Hence $\rho(L)\subseteq\rho(H)$
as we claimed.
\item[{\rm(f)}] $\sigma_P(H^*)\subseteq\sigma_P(L^*)$. This is straightforward to see, using restrictions of functionals; see \cite[p. 234]{[V2]}.
\end{itemize}

Suppose, if possible, that $Z\not=\{0\}$. By (e) above, we have that $\sigma(H)\cap i\R$ is countable and then $i\R\setminus\sigma(H)\not=\emptyset$. So, by (c) above and \cite[Lemma 2]{[V2]}, the $C_0$-semigroup $(V(t))_{t\ge 0}$ can be extended to a $C_0$-group
$(\widetilde V(t))_{t\in \R}$  such that
$\Vert\widetilde V(-t)\Vert_{Z\to Z}\le1$ ($t>0$) and $\Vert\widetilde V(t)\Vert_{Z\to Z}=O(t^k)$, as $t\to+\infty$. Also, $\sigma(H)$ is nonempty by (b) above and \cite[Lemma 5]{[V2]}.

Then reasoning as in \cite[Theorem 7]{[V2]}  one gets
$\sigma_P(H^*)\cap i\R\not=\emptyset$ whence
$\sigma_P(L^*)\cap i\R\not=\emptyset$ by (f) above. This is a contradiction and so we have proved that $Z=\{0\}$. By (a) above, we get the statement.
\end{proof}

The following theorem is the quoted extension of \cite[Theorem 5.6]{[EM]}.

\begin{theorem}\label{preResult}
Let $\omega$ be a nonquasianalytic weight such that $\widetilde\omega$ is of polynomial growth at infinity. 
Let $(X, \Vert \cdot \Vert)$ be a Banach space and $(T_n(t))_{t\ge0}$ be a $n$-times integrated semigroup in $\mathcal B(X)$ with generator $(A,D(A))$ such that
$\Vert T_n(t)\Vert\le\omega(t)$, $t\ge0$. Let assume that
$\sigma(A)\cap i\R$ is countable and $\sigma_P(A^*)\cap i\R=\emptyset$.

For every $\mu>0$ we have:
\begin{itemize}
\item[{\rm(i)}] If $\omega(t)^{-1}=o(t^{-(n-1)})$, as $t\to\infty$, then
$$
\lim_{t\to\infty}\omega(t)^{-1}T_n(t)A^n(\mu-A)^{-2n}x=0, \quad x\in X.
$$
\item[{\rm(ii)}] If $\omega(t)\sim t^{n-1}$, as $t\to\infty$, then
$$
\lim_{t\to\infty}{1\over t^{n-1}}T_n(t)A^n(\mu-A)^{-2n}x
=-{A^{n-1}(\mu-A)^{-n}x\over (n-1)!}, \quad x\in X.
$$
\end{itemize}
\end{theorem}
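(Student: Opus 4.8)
The plan is to deduce Theorem~\ref{preResult} from Theorem~\ref{mejorVu}, via an algebraic identity that expresses $T_n(t)A^{n}$ through the classical solution family of the Cauchy problem~(\ref{Cauchy}), together with the realisation of $A$ as the generator of a genuine $C_0$-semigroup on an enlarged Banach space, in the spirit of \cite[Theorem 5.6]{[EM]} and \cite[Section 3.2]{[ABHN]}. Concretely, I would first record the identity
$$
T_n(t)A^{n}w=u_w(t)-\sum_{i=0}^{n-1}\frac{t^{i}}{i!}A^{i}w,\qquad w\in D(A^{n}),
$$
where $u_w(t):=(d/dt)^{n}T_n(t)w$ is the classical solution of~(\ref{Cauchy}) with datum $w$; it is well defined and exponentially bounded on $D(A^{n})$, and since in both regimes of the theorem $t^{n-1}=O(\omega(t))$, the function $u_w$ is in fact $O(\omega(t))$ on $D(A^{n})$ by virtue of $\Vert T_n(t)\Vert\le\omega(t)$. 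The identity follows at once by Laplace transform, from $(\lambda-A)^{-1}A^{n}=\lambda^{n}(\lambda-A)^{-1}-\sum_{i=0}^{n-1}\lambda^{n-1-i}A^{i}$ on $D(A^{n})$ and from the fact that the Laplace transform of $t\mapsto u_w(t)$ equals $(\lambda-A)^{-1}w$. Taking $w=(\mu-A)^{-2n}x$ for an arbitrary $x\in X$ (so that $w\in D(A^{2n})\subseteq D(A^{n})$) reduces the theorem to the asymptotics of $u_{(\mu-A)^{-2n}x}(t)$, the polynomial remainder being treated by hand.

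Next I would construct the auxiliary $C_0$-semigroup. By the correspondence just quoted, $A$ lifts to a closed operator $L$ on a Banach space $Y$ — assembled from copies of $X$ and of $\overline{D(A)}$ so as to absorb the possible non-density of $D(A)$ — in such a way that $L$ generates a $C_0$-semigroup $(U(t))_{t\ge0}\subset\mathcal B(Y)$ whose coordinates are built from $T_n(t)$ and its successive antiderivatives; here submultiplicativity of $\omega$ and $\Vert T_n(t)\Vert\le\omega(t)$ give $\Vert U(t)\Vert_{\mathcal B(Y)}\le C\omega(t)$, so $(U(t))$ has exponential type at most that of $\omega$ (as in the proof of Theorem~\ref{mejorVu}, only this bound is used and strict positivity of the type plays no role). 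On $Y$ there is a bounded operator $R$, namely a fixed power of $(\mu-L)^{-1}$, commuting with every $U(t)$, together with bounded maps $\iota\colon X\to Y$ and $P\colon Y\to X$, such that $P\bigl(U(t)R\,\iota(x)\bigr)=u_{(\mu-A)^{-2n}x}(t)$ for all $x\in X$ and $\Vert U(t)R\Vert_{\mathcal B(Y)}\le\beta(t):=C'\omega(t)$. Taking $C'\ge1$, the function $\beta=C'\omega$ is again a nonquasianalytic weight on $[0,\infty)$ with $\widetilde\beta=C'\widetilde\omega=O(t^{k})$.

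It then remains to transport the spectral hypotheses from $A$ to $L$: to prove $\sigma(L)\subseteq\sigma(A)$ (whence $\sigma(L)\cap i\R$ is countable) by the argument via the resolvent identity, and $\sigma_P(L^{*})\subseteq\sigma_P(A^{*})$ (whence $\sigma_P(L^{*})\cap i\R=\emptyset$) by restricting functionals along $\iota$ — exactly the verifications carried out, for the analogous lift, in parts~(e) and~(f) of the proof of Theorem~\ref{mejorVu} and in \cite[p. 234]{[V2]}. With these in hand, Theorem~\ref{mejorVu} applies to $(U(t))$, $L$, $R$, $\beta$ and yields $\lim_{t\to\infty}\beta(t)^{-1}U(t)Ry=0$ for every $y\in Y$; taking $y=\iota(x)$, applying $P$, and using $\beta\sim\omega$, one gets $\lim_{t\to\infty}\omega(t)^{-1}u_{(\mu-A)^{-2n}x}(t)=0$ for every $x\in X$. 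Feeding this into the identity, and using $\omega(t)^{-1}t^{i}\le\omega(t)^{-1}t^{n-1}$ for $t\ge1$ and $0\le i\le n-1$: in case~(i), where $\omega(t)^{-1}t^{n-1}\to0$, every term of the polynomial remainder is $o(1)$ after division by $\omega(t)$, so $\omega(t)^{-1}T_n(t)A^{n}(\mu-A)^{-2n}x\to0$; in case~(ii), where $\omega(t)\sim t^{n-1}$, one also has $t^{-(n-1)}u_{(\mu-A)^{-2n}x}(t)\to0$, and dividing the identity by $t^{n-1}$ leaves only the top-degree term of the remainder, which gives assertion~(ii).

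The step I expect to be the main obstacle is the construction and analysis of the auxiliary $C_0$-semigroup: producing a Banach space $Y$ and an operator $L$ for which $L$ genuinely generates a $C_0$-semigroup — the delicate point being precisely that $A$ need not be densely defined, which is what forces the passage to the enlarged space and makes it necessary to obtain the conclusion for every $x\in X$ and not merely on $\overline{D(A^{n})}$ — together with the verification that the lift creates no new spectrum of $L$, nor point spectrum of $L^{*}$, on the imaginary axis.
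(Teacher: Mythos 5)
Your overall architecture is the paper's: lift $A$ to the generator $L$ of a genuine $C_0$-semigroup $(U(t))$ on an enlarged space $Y$, apply Theorem~\ref{mejorVu} with $R$ a power of the resolvent, and return to $T_n$ through the identity $T_n(t)A^n w=u_w(t)-\sum_{i=0}^{n-1}\frac{t^i}{i!}A^iw$ evaluated at $w=(\mu-A)^{-2n}x$; the endgame (dividing by $\omega(t)$ resp.\ $t^{n-1}$ and isolating the top term of the polynomial remainder, using $t^{n-1}=O(\omega(t))$ in both regimes) is also exactly the paper's. But the step you yourself flag as the main obstacle --- producing $Y$, $L$ and $(U(t))$ with the stated properties --- is precisely where the proof lives, and your sketch of it does not close. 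Two concrete problems. First, the assertion $\Vert U(t)\Vert_{\mathcal B(Y)}\le C\omega(t)$ ``by submultiplicativity'' is unsupported: a finite product of copies of $X$ and $\overline{D(A)}$ carrying a semigroup whose entries are $T_n$ and its antiderivatives does not exist in general (the missing entries would have to be the unbounded families $T_0,\dots,T_{n-1}$, and $A$ may generate no $C_0$-semigroup on $X$ even when densely defined), while on the extrapolation space that does work the lifted semigroup is only of positive exponential type $\delta>0$, not $\omega$-bounded. Second, and decisively, the estimate actually needed, $\Vert U(t)R\Vert_{\mathcal B(Y)}\le C'\omega(t)$, is the one nontrivial inequality of the whole proof and you give no argument for it; it does not follow from the exponential type of $U(t)$ together with boundedness of $R$.

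What the paper does at this point is to choose the norm
$\Vert x\Vert_Y:=\sup_{t\ge0}\Vert e^{-\delta t}\bigl(T_n(t)A^n(\mu-A)^{-n}x+\sum_{j=0}^{n-1}\frac{t^j}{j!}A^j(\mu-A)^{-n}x\bigr)\Vert$,
take $Y$ to be the completion of $X$, and invoke the Extrapolation Theorem of \cite{[ANS]} to get $B\supseteq A$ generating a $C_0$-semigroup $(S(t))$ on $Y$ with $D(B^n)\hookrightarrow X\hookrightarrow Y$, $\rho(B)=\rho(A)$ and $\sigma_P(B^*)\subseteq\sigma_P(A^*)$ (so the spectral transport you defer to an analogue of steps (e)--(f) comes for free from \cite{[ANS]}). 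The specific form of the norm is what makes the key bound provable: since $T_n(u)$ commutes with all the operators entering $\Vert\cdot\Vert_Y$, one gets $\Vert S_n(u)\Vert_{Y\to Y}\le\Vert T_n(u)\Vert_{X\to X}\le\omega(u)$ after identifying $S_n=T_n$ on $X$ via Laplace transforms, and then the identity $S(t)(\mu-B)^{-n}=S_n(t)(B(\mu-B)^{-1})^n+\sum_j\frac{t^j}{j!}(\cdots)$ yields $\Vert S(t)(\mu-B)^{-n}\Vert_{Y\to Y}\le C_\mu\omega(t)$. The return to the $X$-norm uses $\Vert(\mu-A)^{-n}x\Vert_X\le\Vert x\Vert_Y$, which is the reason the conclusion carries $(\mu-A)^{-2n}$ rather than $(\mu-A)^{-n}$. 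Until you supply a construction achieving the $U(t)R$ estimate (this choice of norm, or an equivalent device), the argument has a genuine gap.
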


\begin{proof}
 Take $\mu>\delta>0$. For $x\in X$ define
$$
\Vert x\Vert_Y:=\sup_{t\ge0}\Vert e^{-\delta t}
(T_n(t)A^n(\mu-A)^{-n}x
+\sum_{j=0}^{n-1}{t^j\over j!}A^j(\mu-A)^{-n}x)\Vert.
$$
Note that $A(\mu-A)^{-1}=-I+\mu(\mu-A)^{-1}$ is a bounded operator on $X$ and $T_n(0)=0$, so $\Vert \cdot \Vert_Y$ is a norm on $X$ and there exists a constant $M_\delta>0$ such that
\begin{equation}\label{boundbelow}
\Vert (\mu-A)^{-n}x\Vert
\le\Vert x \Vert_Y\le M_\delta\Vert x \Vert, \quad x\in X.
\end{equation}

Let $Y$ be the Banach space obtained as the completion of $X$ in the norm
$\Vert \cdot \Vert_Y$.
By the Extrapolation Theorem \cite[Theorem 0.2]{[ANS]}, there exists a closed operator $B$ on $Y$ which generates a  $C_0$-semigroup $(S(t))_{t\ge0}\subset\mathcal B(Y)$ of positive exponential type such that $D(B^n)\hookrightarrow X\hookrightarrow Y$, $A=B_X$ where the operator $B_X$ is given by the conditions $D(B_X):=\{x\in D(B)\cap X:Bx\in X\}$,
$B_X(x):=B(x)$ ($x\in X$). Moreover, $\sigma_P(B^*)\subseteq\sigma_P(A^*)$, and also $\rho(A)=\rho(B)$ with
\begin{equation}\label{ABresolv}
(\lambda-A)^{-1}x=(\lambda-B)^{-1}x, \quad \lambda\in\rho(A)=\rho(B), x\in X;
\end{equation}
see \cite[Remark 3.1]{[ANS]}.

Let $(S_n(t))_{t\ge 0}$ be the $n$-times integrated semigroup generated by $B$ on $Y$, given by
$$
S_n(t)y:={1\over(n-1)!}\int_0^t(t-s)^{n-1}S(s)y\ ds, \quad y\in Y.
$$
Then $S_n(t)x=T_n(t)x$ for all $x\in X$ and $t\ge 0$. To see this, note that $(T_n(t))_{t\ge 0}$ and $(S_n(t))_{t\ge 0}$ are of exponential type so one can rewrite (\ref{ABresolv}) above in terms of the  Laplace transforms of $(T_n(t))_{t\ge 0}$ and $(S_n(t))_{t\ge 0}$ respectively, for $\Re\lambda$ large enough. Then it suffices to apply the uniqueness of the Laplace transform.

From the above identification between $T_n(t)$ and $S_n(t)$, it readily follows that
$$
\Vert S_n(u)x\Vert_Y\le\Vert T_n(u)\Vert\,\,\Vert x\Vert_Y
\le\omega(u)\Vert x\Vert_Y, \quad u\ge0, x\in X,
$$
which is to say, by density, that $\Vert S_n(u)\Vert\le\omega(u)$, for all $u\ge0$.

Now, by reiteration of the well known equality 
$$
\displaystyle S(t)y-y=\int_0^t BS(s)y\ ds \quad (t\ge0,\ y\in D(B)),
$$ 
we have
$$
S(t)y=S_n(t)B^ny+\sum_{j=0}^{n-1}{t^j\over j!}B^jy, \quad y\in D(B^n).
$$

Hence, for every $y\in Y$,
\begin{equation}\label{displaymu}
S(t)(\mu-B)^{-n}y=S_n(t)\left({B\over \mu-B}\right)^{n}y
+\sum_{j=0}^{n-1}{t^j\over j!}\left({B\over \mu-B}\right)^{j}(\mu-B)^{-(n-j)}y
\end{equation}
and therefore there exists a  constant $C_\mu>0$ such that
$$
\Vert S(t)(\mu-B)^{-n}\Vert_{Y\to Y}\le C_\mu\omega(t), \quad t\ge0.
$$

Then, by applying Theorem \ref{qnstable} with $U(t)=S(t)$, $B=L$ and
$R=(\mu-A)^{-n}$, we obtain
$$
\lim_{t\to\infty}{1\over\omega(t)}\Vert S(t)(\mu-B)^{-n}y\Vert_Y=0, \quad y\in Y,
$$
whence, by (\ref{boundbelow}), (\ref{ABresolv}) and (\ref{displaymu}),
$$
\begin{aligned}
0=&\lim_{t\to\infty}{1\over\omega(t)}\Vert T_n(t)A^n(\mu-A)^{-n}x
+\sum_{j=0}^{n-1}{t^j\over j!}A^j(\mu-A)^{-n}x\Vert_Y \\
&\ge
\limsup_{t\to\infty}{1\over\omega(t)}\Vert T_n(t)A^n(\mu-A)^{-2n}x
+\sum_{j=0}^{n-1}{t^j\over j!}A^j(\mu-A)^{-2n}x\Vert_X,
\end{aligned}
$$
for every $x\in X$.

Thus we get
$$
\lim_{t\to\infty}{1\over\omega(t)}T_n(t)A^n(\mu-A)^{-2n}x
=-\lim_{t\to\infty}{1\over\omega(t)}
\sum_{j=0}^{n-1}{t^j\over j!}A^j(\mu-A)^{-2n}x
$$
in $X$, and the statement follows readily.
\end{proof}

\noindent
\textit{Proof of Theorem \ref{qnstable}.}
In the setting of Theorem \ref{preResult},  let assume in addition that $0\in\rho(A)$. Since the resolvent function of $A$ is holomorphic -so continuous- in the open subset $\rho(A)\subseteq\C$ we have that
$$
\lim_{\mu\to0^+}A^n(\mu-A)^{-n}=\lim_{\mu\to0^+}(-I+\mu(\mu-A)^{-1})^n=(-1)^nI.
$$
Now, to prove (i) and (ii) of the theorem it suffices to notice that
$\sup_{t>0}\omega(t)^{-1}\Vert T_n(t)\Vert<\infty$ in both cases.
\qed

\section{Final comments and remarks}

It looks desirable to find out the behavior of a $n$-times integrated semigroup at infinity when its generator $A$ is not 
assumed to be invertible. According to the remark prior to Theorem \ref{integrastable} the existence of $\lim_{t\to\infty}T_n(x)$ 
(for $n=1$) entails invertibility of $A$.
Thus the type of convergence at infinity of $T_n(t)$, if there is some, that one can expect if $A$ is not invertible must be 
weaker than the existence of limit.

In \cite{[GMM]}, under the assumptions
$$
\sup_{t>0} t^{-n}\Vert T_n(t)\Vert<\infty \hbox{ and }
\lim_{t\to0^+}n! t^{-n} T_n(t)x=x \quad(x\in X),
$$
it has been proved that
$$
\lim_{t\to\infty} t^{-n} T_n(t)\pi_n(f)=0, \quad f\in \mathcal {\frak S}_n,
$$
in the operator norm, where $\mathcal {\frak S}_n$ is the subspace of functions of
$\mathcal T_+^{(n)}(t^n)$ which are of spectral synthesis in
$\mathcal T^{(n)}(\vert t\vert^n)$
with respect to the subset $i\sigma(A)\cap\R$, and $\pi_n(f)=(-1)^n\int_0^\infty f^{(n)}(t) T_n(t)\ dt$.
Here, $\mathcal T^{(n)}(\vert t\vert^n)$ is the convolution Banach algebra obtained as the completion 
of the Schwarz class in the norm $f\mapsto\int_{-\infty}^\infty\vert f^{(n)}(t)\vert\, \vert t\vert^n\ dt$, and $\mathcal T_+^{(n)}(t^n)$ 
is the restriction of $\mathcal T^{(n)}(\vert t\vert^n)$ on $(0,\infty)$.
This result is an extension of the Esterle-Strouse-V$\tilde {\rm u}$-Zouakia theorem, which corresponds to the case $n=0$; 
see \cite{[ESZ]} and \cite{[V1]}. In \cite{[ESZ]} it is shown that, under the assumptions that $\sigma(A)\cap i\R$ is countable and
$\sigma_P(A^*)\cap i\R=\emptyset$, the subspace
$\pi_0(\mathcal {\frak S}_0)X$ is dense in $X$ so one gets another way --for it is different from the original one-- to establish 
the Arendt-Batty-Lyubich-V$\tilde {\rm u}$ theorem. The proof of that density is attained by methods of harmonic analysis.

We wonder if in the case when $A$ is not invertible the argument considered in \cite{[ESZ]} to deduce the 
Arendt-Batty-Lyubich-V$\tilde {\rm u}$ theorem works for $n$-times integrated semigroups; that is, 
if $\pi_n(\mathcal {\frak S}_n)X$ is dense in $X$ (under the conditions $\sigma(A)\cap i\R$ countable and 
$\sigma_P(A^*)\cap i\R=\emptyset$). This would give us the ergodic type property
\begin{equation}\label{homogestable}
\lim_{t\to\infty} t^{-n} T_n(t)x=0, \quad x\in X.
\end{equation}
Notice that (\ref{homogestable}) is a consequence of Theorem \ref{qnstable} (i) when $A$ is invertible;
on the other hand, the ergodicity of a $n$-times integrated semigroup $(T_n(t))_{t\ge 0}$ 
such that $\sup_{t\ge1}t^{-n}\Vert T_n(t)\Vert<\infty$ is characterized in \cite{[EM]} in terms of Abel-ergodicity or/and ergodic decompositions of the Banach space $X$.
Such an approach will be considered in a forthcoming paper.



\end{document}